\newcommand{\DG}{\operatorname{\mathsf{DG}}}
\newcommand{\Gr}{\operatorname{\mathsf{Gr}}}
\newcommand{\Ch}{\operatorname{\mathbf{Ch}}\!}
\newcommand{\Der}{\operatorname{\mathbf{D}}\!}
\newcommand{\Tot}{\operatorname{Tot}}
\newcommand{\HH}{{\operatorname H}}
\newcommand{\dell}{\partial}
\newcommand{\rank}{\operatorname{rank}}
\newcommand{\ZZ}{\mathbb{Z}}
\newcommand{\NN}{\mathbb{N}}
\newcommand{\xra}{\xrightarrow}
\newcommand{\var}{{\hskip1pt\vert\hskip1pt}}
\newcommand{\cls}{\operatorname{cls}}
\newcommand{\shift}{\Sigma}
\newcommand{\susp}{\sigma}
\theoremstyle{plain}
\newtheorem{theorem}{Theorem}[section]
\newtheorem{proposition}[theorem]{Proposition}
\newtheorem{lemma}[theorem]{Lemma}
\newtheorem{corollary}[theorem]{Corollary}
\theoremstyle{definition}
\newtheorem{definition}[theorem]{Definition}
\newtheorem{example}[theorem]{Example}
\theoremstyle{remark}
\newtheorem{remark}[theorem]{Remark}
\newtheoremstyle{myplain}
     {1em plus 0.15em minus 0.05em}
     {1em plus 0.15em minus 0.05em}
     {\itshape}
     {}
     {\bf}
     {.}
     {0.5em}
     {}
\newtheoremstyle{mydef}
     {1em plus 0.15em minus 0.05em}
     {1em plus 0.15em minus 0.05em}
     {}
     {}
     {\bf}
     {.}
     {0.5em}
     {}
\newtheoremstyle{myrmk}
     {1em plus 0.15em minus 0.05em}
     {1em plus 0.15em minus 0.05em}
     {}
     {}
     {\itshape}
     {.}
     {0.5em}
     {}
\theoremstyle{myplain}
\theoremstyle{myrmk}
\theoremstyle{mydef}
\begin{document}
\title[On the Image of the Totaling Functor]{On the Image of the Totaling Functor}
\author[K.\ A.\ Beck]{Kristen A.\ Beck}
\address{Kristen A.\ Beck, Department of Mathematics, University of Arizona, 617 N.\ Santa Rita Ave., Tucson, AZ, 85719, U.S.A.}
\email{kbeck@math.arizona.edu}

\thanks{{\em Date.} \today.}
\thanks{{\em 2010 Mathematics Subject Classification.} 13D09, 18E30}
\thanks{{\em Key words and phrases.} Totaling, differential graded algebra, derived category.}
\thanks{This research was partially supported by NSA Grant H98230-07-1-0197 and NSF GK-12 Program Grant 0841400.}

\begin{abstract}
Let $A$ be a DG algebra with a trivial differential over a commutative unital ring. This paper investigates the image of the totaling functor, defined from the category of complexes of graded $A$-modules to the category of DG $A$-modules. Specifically, we exhibit a special class of semifree DG $A$-modules which can always be expressed as the totaling of some complex of graded free $A$-modules. As a corollary, we also provide results concerning the image of the totaling functor when $A$ is a polynomial ring over a field.
\end{abstract}

\maketitle

\section*{Introduction}

Let $A$ be a DG algebra over a commutative unital ring. Furthermore, let $\DG(A)$ denote the category of DG $A$-modules and their degree zero chain maps, and let $\Ch\Gr(A^\natural)$ denote the category of (co)chain complexes of graded $A^\natural$-modules and their degree zero chain maps. The motivation for the work in this paper is to better understand the difference between these categories.  In the most general case, one can easily see that $\DG(A)$ is a much `richer' category than $\Ch\Gr(A^\natural)$ --- indeed, its objects take into account two differentials rather than just one.  However, when $A$ has a trivial differential (so that $A=A^\natural$), the difference between $\DG(A)$ and $\Ch\Gr(A^\natural)$ is not so striking.

The goal of this paper is to address the latter scenario by studying the image of the so-called \emph{totaling functor} $\Tot\colon \Ch\Gr(A^\natural)\to\DG(A)$ in the case that $A$ has a trivial differential. Given a cochain complex $X$ of graded modules over such a DG algebra $A$, one defines $\Tot X$ to be the complex whose underlying graded $A^\natural$-module structure is given by
\[
\left(\Tot X\right)^\natural:=\bigoplus_{i\in\ZZ}\shift^{-i}X^i
\]
and whose differential follows in a natural way from that of $X$.  Upon defining an $A$-action on this complex, $\Tot X$ admits the structure of a DG $A$-module. The primary question we consider is whether the totaling functor is surjective.

In Theorem \ref{thm:crossing}, we provide a necessary and sufficient condition for a DG module over a DG algebra $A$ with a trivial differential to be equal to the totaling of some complex of graded free $A^\natural$-modules.  The constructive nature of the proof of this result furthermore allows us to express a `totaling pre-image' for DG modules which lie in the image of the totaling functor.  To answer the question of whether the totaling functor is surjective, we restrict to the case that $A$ is a polynomial ring over a field.  In Example \ref{ex:notinim}, we exhibit a DG module over a polynomial ring in two (or more) variables which does not satisfy the condition specified by Theorem \ref{thm:crossing}, and therefore does not lie in the image of the functor.  Moreover, in Theorem \ref{thm:1var}, we illustrate that every DG module over a polynomial ring in one variable is quasiisomorphic to the totaling of some complex of graded $A^\natural$-modules.

\section{Background}

\numberwithin{theorem}{subsection}
\numberwithin{equation}{theorem}

The results in this paper will assume a working understanding of differential graded (DG) algebras and their modules.  For a thorough treatment of this subject, the reader is referred to \cite{Av10}, \cite{AvFoxHal}, or \cite{Ke06}.  In what follows, $A$ is assumed to be a DG algebra over a commutative unital ring.

\subsection{Semifree DG modules}

We begin by introducing a class of DG modules which generalize free modules over a ring.  They will form the basic structures necessary for the construction of (counter-)examples in the sequel, and will also be essential to the statement of our main result.

\begin{definition}
Let $M$ be a DG $A$-module. A subset $E\subseteq M^\natural$ is called a \emph{semibasis} for $M$ if
\begin{enumerate}
\item $E$ is a basis for $M^\natural$ over $A^\natural$, and
\item $E=\bigsqcup_{d\in\NN}E_d$ (a disjoint union) such that
\[
\dell(E_d)\subseteq A\left(\bigsqcup_{i<d}E_i\right)
\]
for all $d\in\NN$.
\end{enumerate}
A DG module that possesses a semibasis is said to be \emph{semifree}.
\end{definition}

\begin{proposition}\label{lem:semifree}\cite{AvFoxHal}*{8.2.3}
Let $M$ be a DG $A$-module.  The following are equivalent.
\begin{enumerate}
\item $M$ is semifree.
\item $M^\natural$ has a well-ordered basis $E$ over $A^\natural$ such that for each $e\in E$
\[
\dell(e)\in A\left(\{e'\in E\var e'<e\}\right).
\]
\end{enumerate}
\end{proposition}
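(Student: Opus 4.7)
The plan is to prove the equivalence by constructing, in each direction, the combinatorial data required by the other definition.

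For (1)$\Rightarrow$(2), I would start from a semibasis $E=\bigsqcup_{d\in\NN}E_d$ and invoke the well-ordering principle to choose a well-ordering $<_d$ on each $E_d$. I then well-order $E$ lexicographically: declare $e<e'$ whenever $e\in E_d$ and $e'\in E_{d'}$ with $d<d'$, and use $<_d$ within a single $E_d$. This is again a well-ordering, and the semibasis condition $\dell(E_d)\subseteq A(\bigsqcup_{i<d}E_i)$ immediately yields $\dell(e)\in A(\{e'\in E\var e'<e\})$ for every $e\in E$.

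For (2)$\Rightarrow$(1), given a well-ordered basis $E$ satisfying the differential condition, I propose to build the $\NN$-indexed filtration recursively: set
\[
E_0=\{e\in E\var \dell(e)=0\},\qquad E_d=\Bigl\{e\in E\setminus\textstyle\bigcup_{i<d}E_i\;\Bigl|\;\dell(e)\in A\bigl(\bigcup_{i<d}E_i\bigr)\Bigr\}
\]
for $d\geq 1$. By construction the $E_d$ are pairwise disjoint and the semibasis condition holds, so the only issue is showing $E=\bigsqcup_{d\in\NN}E_d$.

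I plan to verify this last claim by contradiction using the well-ordering. If the inclusion $\bigsqcup_{d\in\NN}E_d\subseteq E$ were strict, let $e$ be the least element of the complement. Every $e'<e$ then lies in some $E_{d'}$, and since elements of $M^\natural$ are finite $A^\natural$-linear combinations of basis elements, we may write $\dell(e)=\sum_{i=1}^n a_ie'_i$ with each $e'_i<e$, say $e'_i\in E_{d_i}$. Taking $d=1+\max_i d_i\in\NN$ (the max exists because the sum is finite) places $e$ in $E_d$, a contradiction. The main obstacle is exactly this exhaustion step in (2)$\Rightarrow$(1): the argument rests on the interplay between the well-ordering (providing a minimal counterexample) and the finiteness of basis expansions (keeping $\max_i d_i$ finite), and it is the only place where the $\NN$-indexing of a semibasis is genuinely forced upon us.
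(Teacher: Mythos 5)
Your proof is correct and follows essentially the same route as the paper's: a lexicographic well-ordering built from well-orderings of the individual $E_d$'s for $(1)\Rightarrow(2)$, and the same recursive filtration $E_d=\{e\in E\setminus\bigcup_{i<d}E_i\mid\dell(e)\in A(\bigcup_{i<d}E_i)\}$ for $(2)\Rightarrow(1)$. You spell out more detail than the paper does---in particular the minimal-counterexample argument combining the well-ordering with the finiteness of $A^\natural$-linear expansions to show $E=\bigsqcup_{d\in\NN}E_d$, which the paper simply asserts---but the underlying idea is the same.
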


\begin{proof}
To show $(1)\Rightarrow(2)$, let $E=\bigsqcup_{d\in\NN}E_d$ be a semibasis for $M$ over $A$.  For each $d\in\NN$, impose an ordering on $E_d$, and further suppose that whenever $d'<d$, $e'<e$ for every $e'\in E_{d'}$ and $e\in E_d$.  This implies that $E$ has an ordering with the desired property.

On the other hand, in order to show that $(2)\Rightarrow(1)$, suppose that $E$ is a well-ordered basis for $M^\natural$ over $A^\natural$ such that $\dell(e)\in A\left(\{e'\in E\var
e'<e\}\right)$ for every $e\in E$.  Set $E_{-1}=\varnothing$ and $M_{-1}=\{0\}$, and for each $d\in\NN$, recursively define $E_d$ and $M_d$ in the following manner.
\begin{align*}
E_d&:=\left\{e\in E\setminus\textstyle{\bigcup_{i<d}E_i}\var\dell(e)\in M_{d-1}\right\}\\
M_d&:=A\left(\bigcup_{i\leq d}E_i\right)
\end{align*}
By this construction, the $E_d$ are mutually disjoint.  Furthermore, the well-ordering of $E$ implies that $E=\bigsqcup_{d\in\NN}E_d$, and the result follows.
\end{proof}

\begin{remark}
A \emph{semifree resolution} of a DG $A$-module $M$ is a quasiisomorphism $\pi\colon F\to M$ of DG $A$-modules where $F$ is semifree.  In \cite{AvFoxHal}, Avramov, Foxby, and Halperin show that every DG module possesses a semifree resolution.  This fact makes possible the study of differential graded homological algebra.
\end{remark}

\subsection{The totaling functor}

Throughout this section, suppose that $A$ has a trivial differential.\footnote{Indeed, $\Tot$ can be defined on the category $\Ch\DG(A)$ over an arbitrary DG algebra $A$ (see \cite{AvFoxHal}*{Section 7.2} for details).  However, for the purposes of our work, it suffices to define the functor in the present setting.}
Let $X$ be a cochain complex in $\Ch\Gr(A^\natural)=\Ch\Gr(A)$, and denote the internal grading of $X^i$ with a subscript; that is, $X^i=\bigoplus_{j\in\ZZ} X_j^i$.  Now define the \emph{totaling} of $X$ to be the complex $\Tot X=\left((\Tot X)^\natural,\dell^{\Tot X}\right)$, whose underlying graded structure is given by
\[
\left(\Tot X\right)^\natural:=\bigoplus_{i\in\ZZ}\shift^{-i} X^i.
\]
Therefore, the $d$th homological component of $\Tot X$ is given by
\[
(\Tot X)_d=\bigoplus_{i\in\ZZ}X^i_{d+i}
\]
and one defines the $A$-module structure on $\Tot X$ according to
\[
a(\susp^{-i}x^i)_{i\in\ZZ}:=\left(\susp^{-i}((-1)^{|a|i}ax^i)\right)_{i\in\ZZ}
\]
for each $a\in A$ and $(\susp^{-i}x^i)_{i\in\ZZ}\in\Tot X$.  Moreover, the differential on $\Tot X$ is defined by
\[
\dell^{\Tot X}\!\left((\susp^{-i} x^i)_{i\in\ZZ}\right):=\left(\susp^{-i-1}\dell_X^{i}(x^{i})\right)_{i\in\ZZ}
\]
for each $(\susp^{-i}x^i)_{i\in\ZZ}\in\Tot X$.

It is now straightforward to check that $\Tot X$ is in fact a DG module over $A$.  Indeed, for any $a\in A$ and $(\susp^{-i} x^i)_{i\in\ZZ}\in\Tot X$, one has
\begin{align*}
\dell^{\Tot X}\!\left(a(\susp^{-i} x^i)_{i\in\ZZ}\right)&=\dell^{\Tot X}\!\left(\left(\susp^{-i}((-1)^{|a|i}ax^i)\right)_{i\in\ZZ}\right)\\
&=\left(\susp^{-i-1}\dell_X^{i}((-1)^{|a|i}ax^{i})\right)_{i\in\ZZ}\\
&=\left((-1)^{|a|i}\susp^{-i-1}(a\,\dell_X^{i}(x^{i}))\right)_{i\in\ZZ}\\
&=\left((-1)^{|a|i}(-1)^{|a|(i+1)}a\,\susp^{-i-1}\dell_X^i(x^i)\right)_{i\in\ZZ}\\
&=(-1)^{|a|}a\left(\susp^{-i-1}\dell_X^{i}(x^{i})\right)_{i\in\ZZ}\\
&=(-1)^{|a|}a\,\dell^{\Tot X}\left((\susp^{-i}x^i)_{i\in\ZZ}\right)
\end{align*}
so that the Leibniz rule is satisfied.

For any morphism $\mu\colon X\to Y$ of complexes of graded $A$-modules, define the following map.
\begin{align*}
\Tot\mu\colon \Tot X &\to\Tot Y\\
(\susp^{-i}x^i)_{i\in\ZZ}&\mapsto(\susp^{-i}\mu^i(x^i))_{i\in\ZZ}
\end{align*}
To see that $\Tot\mu$ is a morphism of DG $A$-modules, note that for each $x^i\in X^i_j$, $\mu^i(x^i)\in Y^i_j$. It follows that totaling is functorial.  Thus, $\Tot\colon \Ch\Gr(A)\to\DG(A)$ is called the \emph{totaling functor}.

\begin{remark}
To define the totaling of a \emph{chain} complex in $\Ch\Gr(A)$, one should note that $\bigoplus_{i\in\ZZ}\shift^{-i}X^i\cong\bigoplus_{i\in\ZZ}\shift^i X_i$.
\end{remark}

\section{Results}

\numberwithin{theorem}{section}
\numberwithin{equation}{theorem}

In this section, we turn our attention to the image of the totaling functor.  Unless otherwise stated, $A$ shall denote a DG algebra with a trivial differential and $k$ shall denote a field.  Furthermore, one may assume that all polynomial rings have the standard grading.

\begin{definition}
Let $A$ be an arbitrary DG algebra, and $M$ a semifree DG module with semibasis $E$ over $A$. Define a family of disjoint sets recursively by
\begin{align}\label{eq:crossing}
\begin{split}
&\mathcal{E}_0:=\{e\in E\mid\dell(e)=0\}\\
&\mathcal{E}_\ell:=\left\{e\in E\mid 0\neq\dell(e)\in A\mathcal{E}_{\ell-1}\right\}
\end{split}
\end{align}
for all $\ell\in\ZZ^+$.  Notice that since $M$ is semifree, $\mathcal{E}_0$ is nonempty.  Now consider the following containment of sets.
\begin{equation}\label{eq:crossingineq}
\bigsqcup_{\ell\in\NN}\mathcal{E}_\ell\subseteq E
\end{equation}
If the containment in \eqref{eq:crossingineq} is strict, we say that $E$ has \emph{crossing}.
\end{definition}

\begin{remark}\label{rmk:crossmatrix}
One can determine whether a particular semibasis for a finitely-generated DG module has crossing by simply examining the matrix representing $\dell$.  Indeed, if $D$ is the matrix representing $\dell$ with respect to the semibasis $E$, then $D$ must \emph{a priori} be strictly upper-triangular.  This follows directly from the definition of semibasis.  If one further supposes that $E=\bigsqcup_{\ell=0}^m\mathcal{E}_\ell$ has no crossing, then $D$ will take a block super-diagonal form.  Specifically,
\[
D=\begin{bmatrix}
Z_0&D_0\\
&Z_1&D_1\\
&&Z_2&\\
&&&^{\ddots}&D_{m-1}\\
&&&&Z_m
\end{bmatrix}
\]
where $Z_i$ is a $|\mathcal{E}_i|\times |\mathcal{E}_i|$ zero matrix, $D_i$ is a $|\mathcal{E}_i|\times |\mathcal{E}_{i+1}|$ matrix with entries in $A^\natural$, and the non-specified entries are zeros.
\end{remark}

To illustrate the concept of crossing, we provide an example which exhibits semibases with and without crossing for the same semifree DG module.

\begin{example}\label{ex:cross/nocross}
Let $A=k[x,y,z]$ and consider the rank four semifree DG $A$-module $M$ with semibasis $E=\{e_1,e_2,e_3,e_4\}$ such that $|e_1|=0$, $|e_2|=2$, $|e_3|=3$, $|e_4|=5$, and
\begin{align*}
&\dell(e_1)=0\\
&\dell(e_2)=xe_1\\
&\dell(e_3)=yze_1\\
&\dell(e_4)=xz^3e_1+yze_2-xe_3.
\end{align*}
Then $\mathcal{E}_0=\{e_1\}$, $\mathcal{E}_1=\{e_2,e_3\}$, and $\mathcal{E}_\ell=\varnothing$ for $\ell\geq 2$. Thus, the strict containment
\[
\bigsqcup_{\ell\in\NN}\mathcal{E}_\ell\subsetneq E
\]
implies that $E$ has crossing.

On the other hand, define a semibasis $E'$ for $M$ by $e'_i=e_i$ for $1\leq i\leq 3$ and $e'_4=e_4-z^3e_2$.  The action of $\dell^M$ on $E'$ is given by
 \begin{align*}
&\dell(e'_1)=0\\
&\dell(e'_2)=xe'_1\\
&\dell(e'_3)=yze'_1\\
&\dell(e'_4)=yze'_2-xe'_3.
\end{align*}
In this case, we obtain $\mathcal{E}_0=\{e'_1\}$, $\mathcal{E}_1=\{e'_2,e'_3\}$, $\mathcal{E}_2=\{e'_4\}$, and $\mathcal{E}_\ell=\varnothing$ for $\ell\geq 3$. Therefore, we have an equality of sets
\[
\bigsqcup_{\ell\in\NN}\mathcal{E}_\ell=E'
\]
which implies that $E$ has no crossing.
\end{example}

Next we state our main result, which gives a necessary and sufficient condition for a DG $A$-module to be equal to the totaling of some complex of graded free $A$-modules.

\begin{theorem}\label{thm:crossing}
Let $M$ be a semifree DG module over a DG algebra $A$ with trivial differential. Then $M$ has a semibasis without crossing if and only if there exists a bounded-below complex $X$ of graded free $A$-modules such that $\Tot X=M$.
\end{theorem}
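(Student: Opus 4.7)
The proof will establish both implications separately.

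For the forward direction, assume $M$ admits a semibasis $E = \bigsqcup_{\ell \in \NN} \mathcal{E}_\ell$ without crossing. The plan is to construct the desired complex $X$ of graded free $A$-modules term by term. Let $X_\ell$ be the graded free $A$-module on a set $\{\bar{e} : e \in \mathcal{E}_\ell\}$ in bijection with $\mathcal{E}_\ell$, with internal degree $|\bar{e}| := |e| - \ell$ so that $\susp^\ell \bar{e}$ in $\Tot X$ recovers the degree of $e \in M$. The no-crossing hypothesis gives $\dell^M(e) \in A\mathcal{E}_{\ell-1}$ for every $e \in \mathcal{E}_\ell$, so writing $\dell^M(e) = \sum_j a_j e_j$ with each $e_j \in \mathcal{E}_{\ell-1}$, we may define $\dell^X_\ell(\bar{e}) = \sum_j a_j \overline{e_j}$. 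Since $A$ has trivial differential, the Leibniz rule collapses $\dell^X_{\ell-1} \circ \dell^X_\ell$ to a scalar computation that matches $(\dell^M)^2(e) = 0$, so $X$ is a chain complex of graded free $A$-modules. The natural bijection $\susp^\ell \bar{e} \leftrightarrow e$ then identifies $\Tot X$ with $M$ as DG $A$-modules.

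For the backward direction, assume $\Tot X = M$ for some complex $X$ of graded free $A$-modules, which we may assume satisfies $X_i = 0$ for $i < m$ (so that the natural semibasis from Fact \ref{lem:semifreetot} is well-defined). The plan is to inductively choose (or modify) bases $\tilde{E}_i$ of each $X_i$ so that the natural semibasis $\bigsqcup_i \susp^i \tilde{E}_i$ of $M$ has no crossing. Start at $i = m$: take any basis $\tilde{E}_m$ of $X_m$, since every element is automatically a cycle and shifts into $\mathcal{E}_0$. Proceeding upward, for each $i > m$ and each basis element $e$ of $X_i$, decompose $\dell_i(e) = c_e + n_e$ with respect to the current $\tilde{E}_{i-1}$, where $c_e$ lies in the $A$-span of cycle basis elements of $\tilde E_{i-1}$ and $n_e$ in the $A$-span of the non-cycle basis elements. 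Since $\dell^2(e) = 0$ and cycle basis elements have vanishing differential, $n_e$ is itself a cycle in $X_{i-1}$. Modify $\tilde{E}_{i-1}$ by replacing a cycle basis element $c_0$ with $c_0 + n_e$, which remains a cycle and a valid basis element since its leading coefficient on $c_0$ is a unit; after this change, $\dell_i(e)$ lies entirely in the $A$-span of the updated cycle basis of $X_{i-1}$, so $\susp^i e$ lies in $\mathcal{E}_1$ of the refreshed semibasis. Iterating over all basis elements of $X_i$ and then over all $i$ yields a semibasis of $M$ without crossing.

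The main obstacle is in the backward direction, specifically in the absorption step: it requires a cycle basis element in $\tilde{E}_{i-1}$ to absorb $n_e$ into, and careful bookkeeping to ensure that absorptions performed for different elements of $E_i$ do not introduce new crossings at level $i$ or disturb the no-crossing property established at lower stages. Edge cases where $\tilde{E}_{i-1}$ contains no cycle basis element must be handled by observing that in such situations $c_e$ is automatically zero (so no crossing arises at all), or by performing a preliminary change of basis on $X_{i-1}$ to introduce a suitable cycle basis element before proceeding. A careful case analysis, exploiting the fact that each $n_e$ is a cycle living in the free module $X_{i-1}$, then completes the inductive argument.
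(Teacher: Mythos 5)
Your forward direction is fine and is essentially the paper's own argument: the strata $\mathcal{E}_\ell$ become the terms of a complex $X$ of graded free modules with differential induced by $\dell^M$, and $\Tot X=M$.

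The backward direction, however, contains a genuine gap, and it sits exactly where you yourself locate ``the main obstacle.'' First, note that the paper's proof requires no change of basis at all: if $\Tot X=M$, one takes the semibasis formed by the chosen bases of the $X_i$ and stratifies it by homological degree (lumping everything at or below the last vanishing differential into $\mathcal{E}_0$); since $\dell^X(X_i)\subseteq X_{i-1}$, the containments $\dell(\mathcal{E}_\ell)\subseteq A\mathcal{E}_{\ell-1}$ hold outright, so this semibasis has no crossing. Your absorption step, by contrast, does not do what you claim. Writing $\dell_i(e)=c_e+n_e$ and replacing a cycle basis element $c_0$ by $c_0+n_e$ forces $\dell_i(e)$ into the span of cycle basis elements only when $c_0$ occurs in $c_e$ with a unit coefficient; in the graded setting the coefficients are homogeneous of positive degree, and then the substitution leaves the non-cycle part of $\dell_i(e)$ intact. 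Concretely, over $A=k[x,y]$ take $X_2=Ae$, $X_1=Af_1\oplus Af_2\oplus Ag$, $X_0=Ae_0$, with generators in internal degrees $2;1,1,1;0$, and $\dell(f_1)=xe_0$, $\dell(f_2)=ye_0$, $\dell(g)=0$, $\dell(e)=yf_1-xf_2+xg$; this is a complex of graded free modules. Here $c_e=xg$ and $n_e=yf_1-xf_2$, and your replacement $g\mapsto g'=g+n_e$ yields $\dell(e)=xg'+(y-xy)f_1+(x^2-x)f_2$, which still involves the non-cycle elements $f_1,f_2$, so $e$ still lies in no stratum and the crossing persists (note also that $g'$ is no longer homogeneous). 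A repair in this example is the opposite move---replace the cycle $g$ by the non-cycle $g-f_2$, so that $\dell(e)=yf_1+x(g-f_2)$ involves only elements of $\mathcal{E}_1$---and your recipe never produces such a change. Since, in addition, the interference and edge-case issues you defer are left unresolved, the induction does not close. The moral is that the homological grading of $X$ already supplies the required stratification, and the basis-normalization scheme is both unnecessary and, as formulated, incorrect.
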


\begin{proof}
Let $E=\bigsqcup_{d\in\NN}E_d$ be a semibasis of $M$ over $A$.  First suppose that $E$ has no crossing, implying that $E=\bigsqcup_{\ell\in\NN}\mathcal{E}_\ell$, where
\begin{align*}
&\mathcal{E}_0=\{e\in E\var\dell^M(e)=0\}\\
&\mathcal{E}_\ell=\left\{e\in E\var0\neq\dell^M(e)\in A\mathcal{E}_{\ell-1}\right\}
\end{align*}
for all $\ell\in\ZZ^+$.  Now define a (possibly infinite) sequence $X$ of homomorphisms of graded $A$-modules by
\[
X\colon \qquad\cdots\to\shift^{-2}A\mathcal{E}_2\xra{\dell^X_2}\shift^{-1}A\mathcal{E}_1\xra{\dell^X_1}A\mathcal{E}_0\to 0
\]
where, for each $\ell\in\NN$ and $e\in\mathcal{E}_\ell$, one has that
\[
\dell^{X}_\ell\big((0,\ldots,0,\susp^{-\ell}e,0,\ldots,0)\big)=\susp^{-\ell+1}\dell^M(e)\subseteq\shift^{-\ell+1}A\mathcal{E}_{\ell-1}.
\]
By construction, $\dell^{X}_{\ell+1}\circ\dell^{X}_\ell=0$ for all $\ell\in\NN$, implying that $X\in\Ch\Gr(A)$. To see that $\Tot X=M$, notice that
\begin{align*}
(\Tot X)^\natural&=\bigoplus_{\ell\in\NN}\shift^{\ell}X_\ell\\
&=\bigoplus_{\ell\in\NN}\!\shift^{\ell}\shift^{-\ell}A\mathcal{E}_\ell\\
&\cong\bigoplus_{\ell\in\NN}A\mathcal{E}_\ell\\
&=M^\natural
\end{align*}
and that
\begin{align*}
\dell^{\Tot X}(\susp^{\ell}\susp^{-\ell}e)&=\susp^{\ell-1}\dell^X_\ell(\susp^{-\ell}e)\\
&=\susp^{\ell-1}\left(\susp^{-\ell+1}\dell^M(e)\right)\\
&=\dell^M(e)
\end{align*}
for all $\ell\in\NN$ and $e\in\mathcal{E}_\ell$.  The result follows.

On the other hand, suppose that $X$ is a bounded-below complex of graded free $A$-modules such that $\Tot X=M$, and let $\widetilde{E}_i$ be a basis for $X_i$ over $A$. That is,
\[
X\colon \quad \cdots\to A\widetilde{E}_{2}\xra{\dell_{2}^X} A\widetilde{E}_1\xra{\dell_1^X} A\widetilde{E}_{0}\to 0
\]
where
\[
M^\natural=\left(\Tot X\right)^\natural=\bigoplus_{i\in\NN}\shift^iA\widetilde{E}_i\quad\text{ and }\quad\dell^M\left((\sigma^i e)_{i\in\NN}\right)=\left(\sigma^{i-1}\dell^X_i(e)\right)_{i\in\NN}
\]
for each $e\in \widetilde{E}_i$. Now define
\begin{align*}
\widetilde{\mathcal{E}}_0&:=\bigsqcup_{i\in\ZZ}\left\{e\in\widetilde{E}_i\mid\dell_i^X(e)=0\right\}\\
\widetilde{\mathcal{E}}_\ell&:=\bigsqcup_{i\in\ZZ}\left\{e\in\widetilde{E}_i\mid 0\neq\dell_i^X(e)\in A\widetilde{\mathcal{E}}_{\ell-1}\right\}
\end{align*}
for each $\ell\in\ZZ^+$. Note that one can write $\widetilde{\mathcal{E}}_\ell=\bigsqcup_{i\in\NN}\widetilde{\mathcal{E}}_{i,\ell}$
where $\widetilde{\mathcal{E}}_{i,\ell}\subseteq\widetilde{E}_i$ for each $\ell\in\NN$. Let $\mathcal{E}_\ell:=\bigoplus_{i\in\NN}\shift^{i}\widetilde{\mathcal{E}}_{i,\ell}$. We will show that $E:=\bigsqcup_{\ell\in\NN}\mathcal{E}_\ell$ is a semibasis for $M$ which does not have crossing.

To this end, we first note that $E$ is a basis for $M^\natural$ over $A$ by construction.  Furthermore, if $(\sigma^i e)_{i\in\NN}\in\mathcal{E}_\ell$, then
\[
\dell^M\left((\sigma^ie)_{i\in\NN}\right)=\left(\sigma^{i-1}\dell^X_i(e)\right)_{i\in\NN}\in\bigoplus_{i\in\NN}\shift^{i-1}A\widetilde{\mathcal{E}}_{i-1,\ell-1}= A\mathcal{E}_{\ell-1}
\]
since $e\in\widetilde{\mathcal{E}}_{i,\ell}$. This implies that $E$ is a semibasis for $M$ without crossing.

\end{proof}

To see that the statement of Theorem \ref{thm:crossing} is not trivial, consider the following example.

\begin{example}\label{ex:notinim}
Let $A=k[x_1,\ldots,x_d]$ where $d\geq 2$, and consider the rank four semifree DG $A$-module $M$ given by
$M^\natural=Ae_1\oplus Ae_2\oplus Ae_3\oplus Ae_4$ such that $|e_1|=0$, $|e_2|=3$, $|e_3|=4$, $|e_4|=8$, and
\begin{align*}
&\dell(e_1)=0\\
&\dell(e_2)=x_1x_2e_1\\
&\dell(e_3)=x_2^3e_1\\
&\dell(e_4)=x_1^7e_1-x_2^4e_2+x_1x_2^2e_3.
\end{align*}
Note that $E=\{e_1,e_2,e_3,e_4\}$ has crossing. This can be verified by examining the matrix representation
\[
D=\begin{bmatrix}
0&x_1x_2&x_2^3&x_1^7\\
0&0&0&-x_2^4\\
0&0&0&x_1x_2^2\\
0&0&0&0
\end{bmatrix}
\]
of $\dell$ with respect to $E$. We will show that there does not exist a semibasis for $M$ without crossing.

To this end, suppose that $E'$ is another semibasis for $M$, and denote by $P$ the change-of-basis matrix; that is, $P\colon E\text{-coordinates}\to E'\text{-coordinates}$. If $D'$ is the matrix representing $\dell$ with respect to $E'$, then $D'=PD$ must take the form prescribed by Remark \ref{rmk:crossmatrix}.  Letting $P=\begin{bmatrix}p_{ij}\end{bmatrix}$, one can write $D'$ as follows.
\[
D'=PD=
\begin{bmatrix}
0&p_{11}x_1x_2&p_{11}x_2^3&p_{11}x_1^7-p_{12}x_2^4+p_{13}x_1x_2^2\\
0&p_{21}x_1x_2&p_{21}x_2^3&p_{21}x_1^7-p_{22}x_2^4+p_{23}x_1x_2^2\\
0&p_{31}x_1x_2&p_{31}x_2^3&p_{31}x_1^7-p_{32}x_2^4+p_{33}x_1x_2^2\\
0&p_{41}x_1x_2&p_{41}x_2^3&p_{41}x_1^7-p_{42}x_2^4+p_{43}x_1x_2^2
\end{bmatrix}
\]
Now, since $E'$ is \emph{a priori} a semibasis for $M$, it should be true that $D'$ is strictly upper-triangular.  This implies that $p_{21}=p_{31}=p_{41}=0$ and $p_{41}x_1^7-p_{42}x_2^4+p_{43}x_1x_2^2=-p_{42}x_2^4+p_{43}x_1x_2^2=0$. Furthermore, since $P$ needs to be invertible, $p_{11}$ must be a unit.  With these relations given, $D'$ now takes the form
\[
D'=
\begin{bmatrix}
0&\alpha x_1x_2&\alpha x_2^3&\alpha x_1^7-p_{12}x_2^4+p_{13}x_1x_2^2\\
0&0&0&p_{21}x_1^7-p_{22}x_2^4+p_{23}x_1x_2^2\\
0&0&0&p_{31}x_1^7-p_{32}x_2^4+p_{33}x_1x_2^2\\
0&0&0&0
\end{bmatrix}
\]
for some $\alpha\in k$. Supposing that $E'$ has no crossing, it follows by Remark \ref{rmk:crossmatrix} that $\alpha x_1^7-p_{12}x_2^4+p_{13}x_1x_2^2=0$.  But this implies that $px_2^4-qx_1x_2^2=x_1^7$ for some $p,q\in k[x_1,\ldots,x_d]$, which cannot be true. Therefore, $E'$ must have crossing.

\end{example}

\begin{remark}
A result of Avramov and Jorgensen \cite{AvJo} can furthermore be used to show that the semifree DG module exhibited in Example \ref{ex:notinim} is, more generally, not in the image of the totaling functor $\Tot\colon \Der\Gr(A)\to\Der\DG(A)$ defined on the respective derived categories. The reader is referred to \cite{Wei94}*{Chapter 10} for a constructive definition of derived categories.
\end{remark}

\begin{corollary}\label
Let $A$ be a polynomial ring in more than one variable over a field. Then the functor $\Tot\colon\Ch\Gr(A)\to\DG(A)$ is not surjective.
\end{corollary}

The constructive nature of the proof of Theorem \ref{thm:crossing} makes it possible for one to cook up a `totaling pre-image' for any semifree DG module which admits a basis without crossing.  The following example demonstrates this fact.

\begin{example}
Let $M$ be the rank four semifree DG module with semibasis $E$ over $A=k[x,y,z]$ defined in Example \ref{ex:cross/nocross}.  As $E$ has no crossing, the construction in the proof to Theorem \ref{thm:crossing} yields a complex
\[
X\colon \qquad 0\to\shift^{-2}Ae_4\xra{
\left[\footnotesize{\begin{array}{@{}r@{\hspace{0.25em}}} yz\\[0.02in]-x
\end{array}}\right]
}\shift^{-1}(Ae_2\oplus Ae_3)\xra{ \left[\footnotesize{\begin{array}{@{\hspace{0.25em}}c@{\hspace{1em}}c@{\hspace{0.25em}}} x&yz
\end{array}}\right]
}Ae_1\to 0
\]
of graded $A$-modules such that $\Tot X=M$.
\end{example}

As one might have guessed, it is not always possible to cook up a semifree DG module whose semibases are guaranteed to have crossing.  For example, if a DG $A$-module has rank no more than three over $A$, it is impossible to define a differential which gives the the semibasis crossing.  The following corollary makes this precise.

\begin{corollary}\label{cor:rankleq3}
Let $A$ be a graded domain, and suppose that $M$ is a semifree DG $A$-module.  If $\rank_A M\leq 3$ then there exists a complex $X$ of graded $A$-modules such that $\Tot X=M$.
\end{corollary}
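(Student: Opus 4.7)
The plan is to invoke Theorem \ref{thm:crossing}, which reduces the corollary to exhibiting, for any rank $\leq 3$ semifree DG $A$-module $M$, a semibasis without crossing. By Proposition \ref{lem:semifree}, fix a semibasis $E$ of $M$ endowed with a well-ordering compatible with the differential. The cases $\rank_A M \in \{0,1\}$ are trivial (the single basis element, if any, is a cycle, hence lies in $\mathcal{E}_0$), and the case $\rank_A M = 2$ is immediate: writing $E=\{e_1<e_2\}$ with $\dell(e_1)=0$ and $\dell(e_2)=a e_1$, either $a=0$ (both elements land in $\mathcal{E}_0$) or $a\neq 0$ (so $e_2\in\mathcal{E}_1$), and neither scenario has crossing.

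The only case with real content is $\rank_A M=3$. Write $E=\{e_1<e_2<e_3\}$ with
\[
\dell(e_1)=0,\qquad \dell(e_2)=a_{21}e_1,\qquad \dell(e_3)=a_{31}e_1+a_{32}e_2
\]
for homogeneous $a_{ij}\in A$. Since $A$ has trivial differential, the Leibniz rule forces $\dell(a\,e)=(-1)^{|a|}a\,\dell(e)$, so
\[
0=\dell^2(e_3)=(-1)^{|a_{32}|}a_{32}\,\dell(e_2)=(-1)^{|a_{32}|}a_{32}a_{21}\,e_1.
\]
Because $e_1$ is part of an $A^\natural$-basis, this gives $a_{32}a_{21}=0$ in $A$; the domain hypothesis then yields $a_{21}=0$ or $a_{32}=0$.

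If $a_{21}=0$, then $\{e_1,e_2\}\subseteq\mathcal{E}_0$ and $\dell(e_3)\in A\mathcal{E}_0$, so $e_3\in\mathcal{E}_0\cup\mathcal{E}_1$. If $a_{32}=0$, then $\dell(e_3)=a_{31}e_1\in A e_1\subseteq A\mathcal{E}_0$, so $e_2,e_3\in\mathcal{E}_0\cup\mathcal{E}_1$. In either situation $E=\bigsqcup_{\ell\in\NN}\mathcal{E}_\ell$, so $E$ has no crossing, and Theorem \ref{thm:crossing} produces a complex $X$ of graded free $A$-modules with $\Tot X=M$.

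There is no serious obstacle here: the entire argument is driven by the single relation $a_{32}a_{21}=0$ imposed by $\dell^2=0$, which in a domain forces one coefficient to vanish and thereby prevents the mixing of different $\mathcal{E}_\ell$-levels that characterizes crossing. The rank bound $3$ is the threshold precisely because at rank $4$ the relation $\dell^2=0$ on a top basis element produces a sum of products of coefficients, no single summand of which is forced to vanish --- this is exactly the mechanism exploited in the counterexample of Theorem \ref{thm:geq2var}.
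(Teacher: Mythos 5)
Your proof is correct and follows the same route as the paper: reduce the claim to showing that a semibasis of $M$ has no crossing, then apply Theorem~\ref{thm:crossing}. You actually make explicit the step the paper leaves tacit---that $\dell^2(e_3)=0$ forces $a_{21}a_{32}=0$, so the domain hypothesis kills one of the coefficients---which is exactly why the paper's rank-three case silently presents only the two reduced subforms rather than the generic one.
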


\begin{proof}
Let $E=\{e_1,\ldots,e_d\}$ be a well-ordered basis for $M^\natural$ over $A$.  Since $M$ is semifree, we obtain the following possible (non-trivial) forms for its
differential, where $a_{ij}\in A$ for each $i,j$.
\begin{center}
\begin{tabular}{lllllll}
$d=1$&$\Rightarrow$&$\dell(e_1)=0$\\[.2cm]
$d=2$&$\Rightarrow$&$\dell(e_1)=0$\\[0.1cm]
&&$\dell(e_2)= a_{12}e_1$\\[.2cm]
$d=3$&$\Rightarrow$&$\dell(e_1)=0$&&&&$\dell(e_1)=0$\\[0.1cm]
&&$\dell(e_2)= a_{12}e_1$&& or &&$\dell(e_2)=0$\\[0.1cm]
&&$\dell(e_3)= a_{13}e_1$&&&&$\dell(e_3)= a_{13}e_1+ a_{23}e_2$
\end{tabular}
\end{center}
Note that in each case, $E$ has no crossing. The result follows by Theorem \ref{thm:crossing}.
\end{proof}

We now turn our attention to the image of the totaling functor in the case that $A$ is a polynomial ring in one variable.  In this setting, it turns out that using Theorem \ref{thm:crossing} to find semifree DG modules which are not in the image of the totaling functor is not so straightforward.  Accordingly, we take a slightly different approach here.


The next lemma, which will follows directly from the structure theorem for finitely generated modules over a principal ideal domain.

\begin{lemma}\label{lem:resh}
Let $M$ be a DG module which is $n$-generated over $A=k[x]$.  Then there exist integers $0\leq s\leq d$ and $1\leq t\leq d$ such that $\HH(M)$ has a graded minimal free resolution over $A$ given by
\[
0\to\bigoplus_{j=1}^s\shift^{c_j}A\xra{
\left[\begin{array}{ccc}
h_1&&\text{\huge{$0$}}\\
&\ddots\\
\text{\huge{$0$}}&&h_s\\[.5pc]
\hline\\
&\text{\huge{$0$}}
\end{array}\right]}
\bigoplus_{i=1}^t\shift^{r_i}A\to\HH(M)\to 0
\]
for some integers $r_i,c_j$ for $1\leq i\leq t$ and $1\leq j\leq s$, and where each $h_i=x^{c_i-r_i}\in Ax$ for each $1\leq i\leq s$.
\end{lemma}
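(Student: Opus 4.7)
The plan is to invoke the structure theorem for finitely generated graded modules over the graded principal ideal domain $A=k[x]$. First, I would observe that since $M$ is $n$-generated over $A$, its underlying graded module $M^\natural$ is finitely generated, and because $A$ is Noetherian the subquotient $\HH(M)$ is finitely generated as well. A graded version of the classification theorem for finitely generated modules over a PID then produces a decomposition
$$\HH(M)\;\cong\;\bigoplus_{i=1}^s\shift^{r_i}A/(x^{n_i})\;\oplus\;\bigoplus_{i=s+1}^t\shift^{r_i}A$$
for some integers $r_i$ and positive integers $n_i$, where the first sum is the torsion part and the second the free part. The bound $t\le n$ (and hence $s\le n$, which is presumably what the $d$ in the statement should be) follows because $t$ is the minimal number of graded generators of $\HH(M)$, and this cannot exceed the number of generators of $M$.

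Next, I would assemble the minimal graded free resolution of $\HH(M)$ summand by summand. Each free summand $\shift^{r_i}A$ (for $s<i\le t$) contributes itself at homological position zero and nothing above. Each cyclic torsion summand $\shift^{r_i}A/(x^{n_i})$ (for $1\le i\le s$) admits the two-term minimal resolution
$$0\to\shift^{c_i}A\xra{x^{n_i}}\shift^{r_i}A\to\shift^{r_i}A/(x^{n_i})\to 0,$$
where $c_i:=r_i+n_i$ is chosen so that multiplication by $x^{n_i}$ is a degree zero homomorphism of graded $A$-modules. Minimality is automatic since $n_i\ge 1$ forces $h_i:=x^{c_i-r_i}=x^{n_i}$ to lie in the graded maximal ideal $Ax$.

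Taking the direct sum of these resolutions, indexing the torsion summands before the free ones, would produce a complex
$$0\to\bigoplus_{j=1}^s\shift^{c_j}A\to\bigoplus_{i=1}^t\shift^{r_i}A\to\HH(M)\to 0$$
whose differential is block-diagonal with entries $h_1,\ldots,h_s$ on the top $s\times s$ block, and zero on the bottom $t-s$ rows (corresponding to the free summands, which receive no map from the source). This matches the claimed block form exactly.

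The main obstacle, and the only step warranting genuine care, is justifying the graded decomposition of $\HH(M)$. This reduces to running the Smith normal form algorithm on a homogeneous presentation matrix of $\HH(M)$ using only row and column operations that preserve homogeneity; because every nonzero homogeneous element of $A=k[x]$ is a scalar multiple of a power of $x$, a homogeneous Smith form exists and its invariant factors are necessarily monomials $x^{n_i}$. Everything else is bookkeeping of shifts.
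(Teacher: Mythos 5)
Your proposal is correct, but it takes a different route from the paper's proof. You first invoke the graded structure theorem over $k[x]$ to decompose $\HH(M)$ into shifted cyclic torsion summands $\shift^{r_i}A/(x^{n_i})$ and shifted free summands, and then assemble the minimal resolution summand by summand, with minimality following from $n_i\geq 1$ and graded Nakayama. The paper instead never decomposes the module: it takes a minimal graded free resolution (of length at most one by Hilbert's syzygy theorem) and diagonalizes the presentation matrix $\varphi$ directly, by homogeneous row and column operations organized via the degree ordering \eqref{eq:ordfs} and the fact that every nonzero homogeneous element of $k[x]$ is a unit times a power of $x$. The computational core is the same in both arguments---a homogeneous Smith normal form, which is exactly how your final paragraph proposes to justify the graded classification theorem---so the difference is one of packaging: your version is shorter if one is willing to quote (or prove once) the graded structure theorem, and it delivers the explicit decomposition \eqref{eq:homiso} that the paper needs anyway in the proof of Theorem \ref{thm:1var}; the paper's version is self-contained and keeps minimality visible throughout, since the reduction happens inside the already-minimal resolution and one only has to check that the diagonal entries stay in $Ax$. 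Your treatment of the bounds is also adequate: $t$ is at most the number of generators of $M$ because $\HH(M)$ is a quotient of the submodule $\Z(M)$ of a finitely generated graded module over a graded principal ideal domain, and $s\leq t$ holds by construction of the decomposition; this cleanly handles the $n$ versus $d$ discrepancy in the statement, which the paper's own proof glosses over.
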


\begin{theorem}\label{thm:1var}
Let $A$ be a polynomial ring in one variable over a field.  Then every semifree DG $A$-module is quasiisomorphic to the totaling of some complex of graded free $A$-modules.  Specifically, the functor $\Tot\colon\Der\Gr(A)\to\Der\DG(A)$ is surjective.
\end{theorem}

\begin{proof}
Let $M$ be a rank $d$ semifree DG module over $A=k[x]$.  By Lemma \ref{lem:resh}, one can assume the homology of $M$ to have the form
\begin{equation}\label{eq:homiso}
\HH(M)\cong\bigoplus_{i=1}^s\frac{\shift^{r_i}A}{h_i\shift^{c_i}A}
\oplus\bigoplus_{i=s+1}^t\shift^{r_i}A
\end{equation}
for some $1\leq t\leq d$ and $0\leq s\leq t$, and positive integers $r_i,c_j$.  (Of course, $s=0$ corresponds to the case that $\HH(M)$ is a free $A$-module.) Consider the deleted minimal graded free resolution $F$ of $\HH(M)$ given as follows.
\[
\qquad\qquad\qquad F\colon \hspace{-1in}
\begin{split}
\xymatrixcolsep{2pc}\xymatrixrowsep{0pc}\xymatrix{
0\ar@{->}[r]&\shift^{c_1}A\ar@{->}[r]^{h_1}&\shift^{r_1}A\ar@{->}[r]&0\\
&\oplus&\oplus\\
&\vdots&\vdots\\
&\oplus&\oplus\\
0\ar@{->}[r]&\shift^{c_s}A\ar@{->}[r]^{h_s}&\shift^{r_s}A\ar@{->}[r]&0\\
&\oplus&\oplus\\
&0\ar@{->}[r]&\shift^{r_{s+1}}A\ar@{->}[r]&0\\
&\oplus&\oplus\\
&\vdots&\vdots\\
&\oplus&\oplus\\
&0\ar@{->}[r]&\shift^{r_t}A\ar@{->}[r]&0\\}
\end{split}
\]

To complete the proof, we will show that $\Tot F\simeq M$.  While it is clear that $\HH(\Tot F)\cong\HH(M)$, it remains to exhibit a chain map (or sequence thereof) which induces this isomorphism.  To this end, for each $1\leq i\leq t$, let $G_i$ be the subcomplex which is given by the $i$th summand of $F$.
\[
G_i\colon \qquad\left\{
\begin{array}{ll}
0\to\shift^{c_i}A\xra{h_i}\shift^{r_i}A\to 0 &\qquad \text{if }1\leq i\leq s\\[0.1cm]
0\to\shift^{r_i}A\to 0 &\qquad \text{if }s<i\leq t
\end{array}\right.
\]
Our goal is to define a family of chain maps $\mu^i\colon \Tot G_i\to M$ such that the chain map $\mu\colon \Tot F\to M$ given by
\begin{equation}\label{eq:mu}
\mu=(\mu^i)\colon \bigoplus^t_{i=1}\Tot G_i\to M
\end{equation}
induces an isomorphism in homology.  For each $1\leq i\leq t$, define $\mu^i$ by a family $\mu^i_j\colon (\Tot G_i)_{j}\to M_j$ of homomorphisms of vector spaces over $k$ in such a way that each $\mu^i$ is $A$-linear and furthermore commutes with the differentials of $\Tot G_i$ and $M$.

For the sake of clarity, we include the following diagram, which illustrates the action of $\mu^i$ on $\Tot G_i$.  Note that the complexes are expressed vertically and that the diagonal maps represent the nontrivial component of $\dell^{\Tot G_i}$.

\begin{equation}\label{eq:XtoTotdiag}
\begin{split}\xymatrixcolsep{0.025pc}\xymatrixrowsep{0.5pc}
\xymatrix{
\vdots & & \vdots & & & & & & & & \vdots\ar@{->}[dd]\\
\oplus & & \oplus\\
(\shift^{c_i+1}A)_{c_i+2}\ar@{->}[ddrr]^{h_i} & \oplus & (\shift^{r_i}A)_{c_i+2} & & & & & & & & M_{c_i+2}\ar@{->}[dd]^{\dell^M_{c_i+2}}\ar@{<-}[llllllll]_{\mu^i_{c_i+2}}\\
\oplus & & \oplus\\
(\shift^{c_i+1}A)_{c_i+1}\ar@{->}[ddrr]^{h_i} & \oplus & (\shift^{r_i}A)_{c_i+1} & & & & & & & & M_{c_i+1}\ar@{->}[dd]^{\dell^M_{c_i+1}}\ar@{<-}[llllllll]_{\mu^i_{c_i+1}}\\
\oplus & & \oplus\\
0 & \oplus & (\shift^{r_i}A)_{c_i} & & & & & & & & M_{c_i}\ar@{->}[dd]^{\dell^M_{c_i}}\ar@{<-}[llllllll]_{\mu^i_{c_i}}\\
\oplus & & \oplus\\
\vdots & & \vdots & & & & & & & & \vdots\ar@{->}[dd]^{\dell^M_{r_i-1}}\\
& & \oplus\\
& & (\shift^{r_i}A)_{r_i} & & & & & & & & M_{r_i}\ar@{->}[dd]^{\dell^M_{r_i}}\ar@{<-}[llllllll]_{\mu^i_{r_i}}\\
& & \oplus\\
& & 0 & & & & & & & & M_{r_i-1}\ar@{->}[dd]\ar@{<-}[llllllll]_{\mu^i_{r_i-1}}\\
& & \oplus\\
& & \vdots & & & & & & & & \vdots
}\end{split}
\end{equation}

\medskip

Although this diagram is restricted to indices given by $1\leq i\leq s$, the situation is straightforward for $i>s$; indeed, these cases represent the torsion-free part of $\HH(M)$.  Thus, for $s<i\leq t$, one has the following.
\[
\mu^i_j\colon \left\{\begin{array}{ll}0\to M_j & \qquad \text{if }j<r_i\\[0.1cm]
(\shift^{r_i}A)_j\to M_j & \qquad \text{if }j\geq r_i
\end{array}\right.
\]

Now consider the following isomorphism of graded $A$-modules.
\[
\varphi\colon \bigoplus_{i=1}^s\frac{\shift^{r_i}A}{h_i\shift^{c_i}A}\oplus\bigoplus_{i=s+1}^t\shift^{r_i}A\to\HH(M)
\]
Fixing $1\leq i\leq t$, let $0\ne z_i\in M_{r_i}$ be a cycle defined in such a way that $\varphi\left(\susp^{r_i}1\right)=\cls(z_i)\in\HH_{r_i}(M)$.  Since $\varphi$ is $A$-linear, $\varphi\left(x^\ell\susp^{r_i}1\right)=\cls(x^\ell z_i)$ for $\ell\geq 0$.  Notice that, for small enough values of $\ell$, these classes are nonzero.  To be precise, $\cls(x^\ell z_i)=0$ if and only if $1\leq i\leq s$ and $\ell\geq c_i-r_i$.  Therefore, for each $1\leq i\leq s$, there exists $m_i\in M_{c_i+1}$ such that $\dell^M_{c_i+1}(m_i)=x^{c_i-r_i}z_i$.

We now proceed to define, for each $i$ and $j$, a basis $U^i_j$ for $(\Tot G_i)_j$ over $k$.
\[
U^i_j\supseteq\left\{
\begin{array}{ll}
\{x^{j-r_i}\susp^{r_i}1,\,(-1)^{j-c_i-1}x^{j-c_i-1}\susp^{c_i+1}1\}\quad & \mbox{if $1\leq i\leq s$ and $j> c_i$}\\[0.02in]
\{x^{j-r_i}\susp^{r_i}1\} & \mbox{otherwise}
\end{array}
\right.
\]

Finally, we define $\mu^i_j\colon (\Tot G_i)_j\to M_j$ in the following way.
\[
\mu^i_j(u)=\left\{\begin{array}{ll}
x^{j-r_i}z_i&\qquad \text{if }u=x^{j-r_i}\susp^{r_i}1\\[0.1cm]
x^{j-c_i-1}m_i&\qquad \text{if }u=(-1)^{j-c_i-1}x^{j-c_i-1}\susp^{c_i+1}1\\[0.1cm]
0&\qquad\hbox{otherwise}
\end{array}\right.
\]
By construction, $\mu^i=(\mu^i_j)$ is an $A$-linear degree zero chain map between $\Tot G_i$ and $M$.  Further, one can easily check that the Leibniz rule is satisfied, so that the map $\mu\colon \Tot F\to M$ given in \eqref{eq:mu} is a morphism of DG modules.  The above construction also guarantees that $\mu$ establishes a one-to-one correspondence between the generators of homology of $\Tot F=\bigoplus_{i=1}^m\Tot G_i$ and that of $M$.  The result follows.
\end{proof}

The following example illustrates the practical use of the construction used in the proof of Theorem \ref{thm:1var}.

\begin{example}
Let $M$ be the rank five semifree DG module over $A=k[x]$ with semibasis given by $\{e_1,e_2,e_3,e_4,e_5\}$ such that $|e_1|=0$, $|e_2|=2$, $|e_3|=4$, $|e_4|=8$, $|e_5|=9$, and where the differential of $M$ is defined by the following.
\begin{align*}
&\dell(e_1)=0\\
&\dell(e_2)=0\\
&\dell(e_3)=0\\
&\dell(e_4)=x^7e_1+x^5e_2\\
&\dell(e_5)=x^4e_3
\end{align*}
The homology of $M$ can be decomposed
\begin{align}\label{eq:decomhom}
\HH(M)&=\frac{Ae_1\oplus Ae_2\oplus Ae_3}{A(x^7e_1+x^5e_2)\oplus Ax^4e_3}\notag\\[0.1in]
&\cong\frac{A(x^2e_1+e_2)}{A(x^7e_1+x^5e_2)}\oplus\frac{Ae_3}{Ax^4e_3}\oplus Ae_1
\end{align}
whence one obtains the following deleted minimal free resolution $F$ of $\HH(M)$.
\[\xymatrixcolsep{2pc}\xymatrixrowsep{0pc}\xymatrix{
&0\ar@{->}[r]&\shift^7A\ar@{->}[r]^{x^5}&\shift^2A\ar@{->}[r]&0\\
&&\oplus&\oplus\\
F\colon &0\ar@{->}[r]&\shift^8A\ar@{->}[r]^{x^4}&\shift^4A\ar@{->}[r]&0\\
&&\oplus&\oplus\\
&&0\ar@{->}[r]&A\ar@{->}[r]&0}
\]

We shall now utilize the proof of Theorem \ref{thm:1var} to show that $\Tot F\simeq M$.  From $F$ we obtain the following subcomplexes.
\begin{align*}
G_1\colon \quad&0\to\shift^7A\xra{x^5}\shift^2A\to 0\\
G_2\colon \quad&0\to\shift^8A\xra{x^4}\shift^4A\to 0\\[0.05in]
G_3\colon \quad&0\to A\to 0
\end{align*}
Referring to the decomposition of homology in \eqref{eq:decomhom}, one has that the cycles generating $\HH(M)$ over $A$ are $z_1=x^2e_1+e_2$, $z_2=e_3$, and $z_3=e_1$.  Then, for each $j\in\NN$, a basis $U^1_j$ of $(\Tot G_3)_j$ over $k$ must be chosen to contain $\{x^j\susp^{0}1\}$.  Furthermore, the respective bases of $(\Tot G_1)_j$ and $(\Tot G_2)_j$ over $k$ are given as follows.
\[
\begin{array}{l}
U^1_j\supseteq\left\{
\begin{array}{ll}
\{x^{j-2}\susp^2 1\}\qquad & \text{if }j<8\\[0.02in]
\{x^{j-2}\susp^2 1,(-1)^{j-8}x^{j-8}\susp^8 1\} & \text{if }j\geq 8
\end{array}
\right.\\[0.15in]
U^2_j\supseteq\left\{
\begin{array}{ll}
\{x^{j-4}\susp^4 1\}\qquad & \text{if }j<9\\[0.02in]
\{x^{j-4}\susp^4 1,(-1)^{j-9}x^{j-9}\susp^9 1\} & \text{if }j\geq 9
\end{array}
\right.
\end{array}
\]

Using these bases, one can now define chain maps $\mu^i=(\mu^i_j)\colon (\Tot G_i)_{j}\to M_j$.
\begin{align*}
\mu^1_j(u)&=\left\{\begin{array}{p{1in}l}
$x^je_1+x^{j-2}e_2$&\text{if }u=x^{j-2}\susp^{2}1\\[0.1cm]
$x^{j-8}e_4$&\text{if }u=(-1)^{j-8}x^{j-8}\susp^{8}1\\[0.1cm]
$0$&\hbox{otherwise}
\end{array}\right.\\[0.15cm]
\mu^2_j(u)&=\left\{\begin{array}{p{1in}l}
$x^{j-4}e_3$&\text{if }u=x^{j-4}\susp^{4}1\\[0.1cm]
$x^{j-9}e_5$&\text{if }u=(-1)^{j-9}x^{j-9}\susp^{9}1\\[0.1cm]
$0$&\hbox{otherwise}
\end{array}\right.\\[0.15cm]
\mu^3_j(u)&=\left\{\begin{array}{p{1in}l}
$x^je_1$&\text{if }u=x^{j}\susp^0 1\\[0.1cm]
$0$&\hbox{otherwise}
\end{array}\right.
\end{align*}
Finally, $\mu\colon \Tot F\to M$ is given by $\mu(x)=\left(\mu^1(x),\mu^2(x),\mu^3(x)\right)$ for each $x\in\Tot F$.
\end{example}

\section*{Acknowledgments}

The author would like to thank Dave Jorgensen and Sean Sather-Wagstaff for both piquing and cultivating her interest in DG algebra.  Thanks also to the referee for many helpful suggestions which greatly improved the overall quality of this manuscript.

\bibliographystyle{amsxport}
\bibliography{mybib}

\end{document}